\newcommand{\bGamma}{{\boldsymbol\varGamma}}
\newcommand{\bSigma}{{\boldsymbol\varSigma}}
\newcommand{\ba}{{\mathbf a}}
\newcommand{\bb}{{\mathbf b}}
\newcommand{\bp}{{\mathbf p}}
\newcommand{\bX}{{\mathbf X}}
\newcommand{\bL}{{\mathbf L}}
\newcommand{\bD}{{\mathbf D}}
\newcommand{\bV}{{\mathbf V}}
\newcommand{\bM}{{\mathbf M}}
\newcommand{\bY}{{\mathbf Y}}
\newcommand{\bA}{{\mathbf A}}
\newcommand{\R}{{\mathbb R}}
\newcommand{\FFF}{\mathfrak{F}}
\DeclareMathOperator{\pr}{\mathsf P}
\DeclareMathOperator{\M}{\mathsf E}
\DeclareMathOperator{\D}{Var}
\DeclareMathOperator{\cov}{Cov}
\DeclareMathOperator{\diag}{diag}
\DeclareMathOperator{\argmin}{argmin}
\newcommand{\bydef}{\stackrel{\text{\rm def}}{=}}
\newcommand{\inprob}{\stackrel{\text{\rm P}}{\longrightarrow}}
\newcommand{\weak}{\stackrel{\text{\rm W}}{\longrightarrow}}
\newtheorem{thm}{Theorem}
\newtheorem{lem}{Lemma}
\begin{document}
\begin{frontmatter}

\title{Linear regression by observations from mixture with~varying~concentrations}
\author{\inits{D.}\fnm{Daryna}\snm{Liubashenko}}\email{dlyubashenko@gmail.com}
\author{\inits{R.}\fnm{Rostyslav}\snm{Maiboroda}\corref{cor1}}\email
{mre@univ.kiev.ua}
\cortext[cor1]{Corresponding author.}

\address{Kyiv National Taras Shevchenko University, Kyiv, Ukraine}

\markboth{D. Liubashenko, R. Maiboroda}{Linear regression by
observations from mixture with varying concentrations}

\begin{abstract}
We consider a finite mixture model with varying mixing probabilities.
Linear regression models are assumed for observed
variables with coefficients depending on the mixture component
the observed subject belongs to. A modification of the least-squares
estimator is proposed for estimation of the regression
coefficients. Consistency and asymptotic normality of the
estimates is demonstrated.
\end{abstract}

\begin{keyword}
Finite mixture model\sep
linear regression\sep
mixture with varying concentrations\sep
nonparametric estimation\sep
asymptotic normality\sep
consistency
\MSC[2010] 62J05 \sep62G20
\end{keyword}

\received{20 October 2015}
\revised{22 November 2015}
\accepted{26 November 2015}
\publishedonline{4 December 2015}
\end{frontmatter}

\section{Introduction}\label{SectIntr}

In this paper, we discuss a structural linear regression technique in
the context of model of mixture with varying concentrations (MVC).
MVC means that the observed subjects belong to $M$ different
subpopulations (mixture components). The true numbers of
components to which the subjects $O_j$, $j=1,\dots,N$, belong, say,
$\kappa_j=\kappa(O_j)$,
are unknown, but we know the probabilities
\[
p_{j;N}^k\bydef\pr\{\kappa_j=k\}, \quad k=1,\dots,M
\]
(mixing probabilities or concentrations of the mixture
components).
MVC models arise naturally
in the description of medical, biologic, and sociologic data
\cite
{Autin:TestDensities,Maiboroda:StatisticsDNA,Maiboroda:AdaptMVC,Shcherbina}.
They can be considered as a generalization of finite mixture
models (FMM). Classical theory of FMMs can be found in
monographs \cite{McLachlan,Titterington}.

Let
\[
\xi(O)=\bigl(Y(O),X^1(O),\dots,X^d(O)
\bigr)^T
\]
be a vector of observed features (random variables) of a subject
$O$. We consider the following linear regression model for these
variables:
\begin{equation}
\label{EqLinRegr} Y(O)=\sum_{i=1}^d
b_i^{(\kappa(O))} X^i(O)+\varepsilon(O),
\end{equation}
where $\bb^{(m)}=(b_1^{(m)},\dots,b_d^{(m)})^T$ are nonrandom
regression coefficients for the $m$th component,
$\varepsilon(O)$ is an error term, which is assumed to be zero
mean and conditionally independent of the regressors vector
$\bX(O)=(X^1(O),\dots,X^d(O))^T$ given $\kappa(O)$.

\medskip

\noindent\textbf{Note.} We consider a subject $O$ as taken at random from
an infinite population, so it is random in this sense. The
vector of observed variables $\xi(O)$ can be considered as a
random vector even for a fixed $O$.

\medskip

Our aim is to estimate the vectors of regression coefficients
$\bb^{(k)}$, $k=1,\dots,M$, by the observations
$\varXi_N=(\xi_1,\dots,\xi_N)$, where $\xi_j=\xi(O_j)$. We assume
that $(\kappa_j,\xi_j)$ are independent for different $j$.

A statistical model similar to MVC with (\ref{EqLinRegr})
is considered in \cite{GrunLeisch},
where
a~parametric model for the conditional distributions of
$\varepsilon(O)$ given $\kappa(O)$ is assumed. For this case,
maximum likelihood estimation is proposed in \cite{GrunLeisch},
and a version of EM-algorithm is developed for numerical computation of
the estimates.

In this paper, we adopt a nonparametric approach assuming no
parametric models for $\varepsilon(O)$ and $\bX(O)$
distributions. Nonparametric and
semiparametric technique for MVC was developed in
\cite{Maiboroda2003,MS2008,Doronin}. We use the weighted empirical
moment technique to derive estimates for the regression
coefficients and then obtain conditions of consistency and
asymptotic normality of the estimates. These results are based
on general ideas of least squares \cite{Seber} and moment estimates
\cite{BorovkovMs}.

The rest of the paper is organized as follows. In Section
\ref{SectNonp}, we recall some results on nonparametric
estimation of functional moments in general MVC. The estimates
are introduced, and conditions of their consistency and
asymptotic normality are presented in Section \ref{SectAss}.
Section \ref{SectProof} contains proofs of the statements of
Section \ref{SectAss}. Results of computer simulations are
presented in Section~\ref{SectSimulat}.

\section{Nonparametric estimation for MVC}\label{SectNonp}

Let us start with some notation and definitions. We denote
by $F_m$ the distribution of $\xi(O)$ for $O$ belonging to the
$m$th component of the mixture, that is,
\[
F_m(A)\bydef\pr\bigl\{\xi(O)\in A\ |\ \kappa(O)=m\bigr\}
\]
for all measurable sets $A$. Then by the definition of MVC
\begin{equation}
\label{EqMVCdef} \pr\{\xi_j\in A\}=\sum_{k=1}^M
p_{j;N}^k F_k(A).
\end{equation}
In the asymptotic statements, we will consider the data
$\varXi_n=(\xi_1,\dots,\xi_N)$ as an element of (imaginary) series
of data $\varXi_1,\varXi_2,\dots,\varXi_N,\dots$ in which no link between
observations for different $N$ is assumed. So, in formal notation,
it should be more correct to write $\xi_{j;N}$ instead of $\xi_j$,
but we will drop the subscript $N$ when it is insignificant.

We consider an array of all concentrations for all data
sizes
\[
\bp\bydef\bigl(p_{j;N}^k, k=1,\dots,M;\ j=1,\dots,N;\ N=1,2,\dots\bigr).
\]
Its subarrays
\[
\bp_N^k\bydef\bigl(p_{1;N}^k,
\dots,p_{N;N}^k\bigr)^T\quad\mbox{and}\quad
\bp_{j;N}=\bigl(p_{j;N}^1,\dots,p_{j;N}^M
\bigr)^T
\]
are considered as vector columns,
and
\[
\bp_N=\bigl(p_{j;N}^k,\ j=1,\dots, N;\ k=1, \dots,M\bigr)
\]
as an $N\times M$-matrix. We will also consider a weight array
$\ba$ of the same structure as $\bp$ with similar notation for
its subarrays.

By the angle brackets with subscript $n$ we denote the averaging
by $j=1,\dots,n$:
\[
\bigl\langle\ba^k\bigr\rangle_N \bydef\frac{1}{N}
\sum_{j=1}^N a_{j;N}^k.
\]
Multiplication, summation, and other operations in the angle
brackets are made elementwise:
\[
\bigl\langle\ba^k \bp^m \bigr\rangle_N = \frac{1}{N} \sum_{j=1}^N a_{j;N}^k p_{j;N}^m;
\qquad
\bigl\langle\bigl(\ba^k\bigr)^2 \bigr\rangle_N = \frac{1}{N} \sum_{j=1}^N \bigl(a_{j;N}^k\bigr)^2.
\]
We define $\langle\bp^m\rangle\bydef\lim_{N\to\infty}\langle\bp
^m\rangle_N$
if this limit exists.
Let $\bGamma_N=(\langle\bp^l\bp^m\rangle_N)_{l,m=1}^M$ $={1\over N}\bp
_N^T\bp_N$
be an
$M\times M$ matrix, and $\gamma_{lm;N}$ be its $(l,m)$th minor.
The matrix $\bGamma_N$ can be considered as the Gramian matrix of vectors
$(\bp^1,\dots,\bp^M)$ in the inner product
$\langle\bp^i\bp^k\rangle_N$, so that it is nonsingular if these
vectors are linearly independent.

In what follows, $\inprob$ means convergence in probability, and
$\weak$ means weak convergence.

Assume now that model (\ref{EqMVCdef}) holds for the data
$\varXi_N$. Then the distribution $F_m$ of the $m$th component can
be estimated by the weighted empirical measure
\begin{equation}
\label{EqWEmeasure} \hat F_{m;N}(A)\bydef{1\over N}\sum_{j=1}^N a_{j;N}^m\mathbh1\{\xi_j\in A\},
\end{equation}
where\vspace*{-3pt}
\begin{equation}
\label{EqDefa} a_{j;N}^m={1\over
\det\bGamma_N}\sum
_{k=1}^M(-1)^{k+m}\gamma_{mk;N}p_{j;N}^k.
\end{equation}
It is shown in \cite{Maiboroda:StatisticsDNA} that if $\bGamma_N$
is nonsingular, then $\hat F_{m;N}$ is the minimax unbiased estimate for
$F_m$. The consistency of $\hat F_{m;N}$ is demonstrated in
\cite{Maiboroda2003} (see also \cite{Maiboroda:StatisticsDNA}).

Consider now functional moment estimation based on weighted
empirical moments.
Let $g:\R^{d+1}\to\R^k$ be a measurable function. Then to estimate\vspace*{-3pt}
\[
\bar g^{(m)}=\M\bigl[g\bigl(\xi(O)\bigr)\ \big\vert\ \kappa(O)=m\bigr]=\int
g(x)F_m(dx),\vspace*{-3pt}
\]
we
can use\vspace*{-3pt}
\[
\hat g^{(m)}_{;N}=\int g(x)\hat F_{m;N}(dx)=
{1\over
N}\sum_{j=1}^N
a_{j;N}^m g(\xi_j).\vspace*{-3pt}
\]
\begin{lem}[Consistency]\label{LemConsist}
Assume that
\begin{enumerate}
\item[\textup{1.}] $\M[|g(\xi(O))|\ |\ \kappa(O)=k]<\infty$ for all $k=1,\dots,M$.

\item[\textup{2.}] There exists $C>0$ such that $\det\bGamma_N>C$ for all $N$
large enough.
\end{enumerate}
\end{lem}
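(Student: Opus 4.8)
The statement being proved is the consistency conclusion $\hat g^{(m)}_{;N}\inprob\bar g^{(m)}$ (for each $m=1,\dots,M$) under conditions 1 and 2. Since $g$ is $\R^k$-valued and convergence in probability holds coordinatewise, the plan is to assume $g$ real-valued; condition 1 then gives finite first moments for that coordinate under each $F_k$. First I would record the exact unbiasedness. Conditioning on $\kappa_j$ and using the mixture representation (\ref{EqMVCdef}) gives $\M[a_{j;N}^m g(\xi_j)]=a_{j;N}^m\sum_k p_{j;N}^k\bar g^{(k)}$, so that $\M\hat g^{(m)}_{;N}=\sum_k\bar g^{(k)}\langle\ba^m\bp^k\rangle_N$. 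The cofactor definition (\ref{EqDefa}) combined with the Laplace expansion of $\det\bGamma_N$ yields the orthogonality relation $\langle\ba^m\bp^k\rangle_N=\delta_{mk}$ (this is the unbiasedness quoted from \cite{Maiboroda:StatisticsDNA}), whence $\M\hat g^{(m)}_{;N}=\bar g^{(m)}$ exactly. It therefore remains only to show $\hat g^{(m)}_{;N}-\M\hat g^{(m)}_{;N}\inprob 0$.

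Next I would establish uniform boundedness of the weights. Every entry $\langle\bp^l\bp^m\rangle_N$ of $\bGamma_N$ lies in $[0,1]$, so each minor $\gamma_{mk;N}$ is bounded by a constant depending only on $M$; together with $p_{j;N}^k\in[0,1]$ and the lower bound $\det\bGamma_N>C$ from condition 2, this produces a constant $A=A(M,C)$ with $|a_{j;N}^m|\le A$ for all $j$ and all large $N$. Because condition 1 supplies only integrability and not square-integrability, I cannot bound $\D\hat g^{(m)}_{;N}$ directly, so the heart of the argument is a truncation. Fixing a level $\tau_N\to\infty$ with $\tau_N/N\to 0$ (for instance $\tau_N=\sqrt N$), I split $g(\xi_j)=g(\xi_j)\Ii\{|g(\xi_j)|\le\tau_N\}+g(\xi_j)\Ii\{|g(\xi_j)|>\tau_N\}$, giving $\hat g^{(m)}_{;N}=U_N+V_N$.

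For the tail part, $\M|V_N|\le A N^{-1}\sum_j\M[|g(\xi_j)|\Ii\{|g(\xi_j)|>\tau_N\}]\le A\max_k\M[|g|\Ii\{|g|>\tau_N\}\mid\kappa=k]$, using $\sum_k p_{j;N}^k=1$; this tends to $0$ by dominated convergence since the conditional first moments are finite, so $V_N\inprob 0$ and $\M V_N\to 0$. For the truncated part the summands satisfy $\D[g(\xi_j)\Ii\{|g(\xi_j)|\le\tau_N\}]\le\tau_N\,\M|g(\xi_j)|\le\tau_N G$ with $G=\max_k\M[|g|\mid\kappa=k]$, and independence of the $\xi_j$ gives $\D U_N\le A^2\tau_N G/N\to 0$, so Chebyshev yields $U_N-\M U_N\inprob 0$. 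Since $\M U_N=\bar g^{(m)}-\M V_N\to\bar g^{(m)}$, combining the three pieces delivers the claim.

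The main obstacle is precisely the gap between the first-moment hypothesis of condition 1 and the second moments a naive variance bound would demand; the truncation at a level growing strictly slower than $N$ is what reconciles them, since $\tau_N\to\infty$ kills the tail while $\tau_N/N\to 0$ kills the truncated variance. A secondary point requiring care is the uniform bound on the weights $a_{j;N}^m$: condition 2 is indispensable here, because without a lower bound on $\det\bGamma_N$ the weights could diverge and invalidate both the variance estimate for $U_N$ and the tail estimate for $V_N$.
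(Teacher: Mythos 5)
Your proof is correct, but it is worth noting that the paper does not actually prove this lemma at all: it simply derives it as "a simple corollary of Theorem 4.2 in [Maiboroda--Sugakova, 2012]" (see also Theorem 3.1.1 of their 2008 book), so your argument is a self-contained replacement for an external citation rather than a variant of an in-paper proof. Your route is the classical one for weak laws under first-moment hypotheses: exact unbiasedness via the cofactor orthogonality $\langle\ba^m\bp^k\rangle_N=\delta_{mk}$ (which the paper itself uses later, in the proof of Theorem 2), a uniform bound $|a_{j;N}^m|\le A(M,C)$ extracted from condition 2, and then truncation at $\tau_N=\sqrt N$ so that the tail term is killed by uniform integrability of $|g|$ within each of the finitely many components while the truncated term has variance $O(\tau_N/N)\to 0$ and succumbs to Chebyshev. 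All three steps check out, including the triangular-array bookkeeping (independence is only needed within each row, and your variance bound is per row). What the citation buys the authors is brevity; what your argument buys is transparency about exactly where each hypothesis enters --- condition 2 only through the uniform boundedness of the weights, condition 1 only through $\max_k\M[|g|\,\Ii\{|g|>\tau\}\mid\kappa=k]\to 0$ --- and it makes clear that no second moments are needed, which a naive variance computation would have demanded. The only cosmetic omission is that you could state explicitly that the minor bound $|\gamma_{mk;N}|\le (M-1)!$ follows from $\langle\bp^l\bp^k\rangle_N\in[0,1]$, but this is immediate.
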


Then $\hat g^{(m)}_{;N}\inprob\bar g^{(m)}$ as $N\to\infty$.

This lemma is a simple corollary of Theorem 4.2 in
\cite{Maiboroda:StatisticsDNA}. (See also Theorem 3.1.1 in
\cite{MS2008}).
\begin{lem}[Asymptotic normality]\label{LemCLT}
Assume that
\begin{enumerate}
\item[\textup{1.}] $\M[|g(\xi(O))|^2\ |\ \kappa(O)=k]<\infty$ for all
$k=1,\dots,M$.

\item[\textup{2.}] There exists $C>0$ such that $\det\bGamma_N>C$ for all $N$
large enough.

\item[\textup{3.}] There exists the limit
\[
\bSigma=\lim_{N\to\infty}N\cov\bigl(\hat g^{(m)}_{;N}
\bigr).\vspace*{-3pt}
\]
\end{enumerate}
\end{lem}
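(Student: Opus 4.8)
The plan is to rewrite the normalized estimator as a normalized sum of independent (but not identically distributed) zero-mean random vectors and then invoke a Lindeberg--Feller central limit theorem for triangular arrays, so that the asserted conclusion $\sqrt N(\hat g^{(m)}_{;N}-\bar g^{(m)})\weak\mathcal N(\mathbf0,\bSigma)$ follows. First I would record exact unbiasedness. Since $\xi_j$ has distribution $\sum_{k=1}^M p_{j;N}^k F_k$, we have $\M g(\xi_j)=\sum_k p_{j;N}^k\bar g^{(k)}$, so that $\M\hat g^{(m)}_{;N}=\sum_k\bar g^{(k)}\langle\ba^m\bp^k\rangle_N$. The cofactor form of the weights in (\ref{EqDefa}), together with $\langle\bp^k\bp^l\rangle_N=(\bGamma_N)_{kl}$ and the Laplace expansion of $\det\bGamma_N$, yields $\langle\ba^m\bp^l\rangle_N=\delta_{ml}$, whence $\M\hat g^{(m)}_{;N}=\bar g^{(m)}$. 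Setting $\zeta_{j;N}\bydef a_{j;N}^m(g(\xi_j)-\M g(\xi_j))$, the $\zeta_{j;N}$ are independent in $j$, centered, and
\[
\sqrt N\bigl(\hat g^{(m)}_{;N}-\bar g^{(m)}\bigr)=\frac1{\sqrt N}\sum_{j=1}^N\zeta_{j;N}.
\]

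Next I would assemble the two ingredients of the CLT. The covariance matrix of the right-hand side is exactly $N\cov(\hat g^{(m)}_{;N})$, which converges to $\bSigma$ by Assumption 3. For the Lindeberg condition I first bound the weights uniformly: each minor $\gamma_{mk;N}$ is the determinant of an $(M-1)\times(M-1)$ submatrix of $\bGamma_N$ whose entries $\langle\bp^i\bp^j\rangle_N$ lie in $[0,1]$, hence $|\gamma_{mk;N}|$ is bounded by a constant depending only on $M$; since $0\le p_{j;N}^k\le1$ and $\det\bGamma_N>C$ by Assumption 2, (\ref{EqDefa}) gives a finite constant $A=A(M,C)$ with $|a_{j;N}^m|\le A$ for all $j$ and all $N$ large enough.

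Then I would verify Lindeberg's condition. From $|\zeta_{j;N}|\le A\,|g(\xi_j)-\M g(\xi_j)|$ and the fact that the centerings $\M g(\xi_j)$ are convex combinations of the finite vectors $\bar g^{(k)}$ (hence uniformly bounded), the family $\{|g(\xi_j)-\M g(\xi_j)|^2\}_{j,N}$ is dominated by finitely many fixed distributions $F_k$ shifted by bounded amounts. Assumption 1 makes this family uniformly integrable, so the tail second moments vanish uniformly in $j,N$ as the threshold grows. Consequently, for each $\tau>0$,
\[
\frac1N\sum_{j=1}^N\M\bigl[|\zeta_{j;N}|^2\,\mathbh1\{|\zeta_{j;N}|>\tau\sqrt N\}\bigr]\longrightarrow0,
\]
since every summand is bounded by $A^2\sup_{j,N}\M[|g(\xi_j)-\M g(\xi_j)|^2\,\mathbh1\{|g(\xi_j)-\M g(\xi_j)|>\tau\sqrt N/A\}]$ and $\tau\sqrt N/A\to\infty$. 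The multivariate Lindeberg--Feller theorem (or the scalar version applied to projections $\langle t,\zeta_{j;N}\rangle$, $t\in\R^k$, via the Cram\'er--Wold device) then delivers the claimed weak convergence to $\mathcal N(\mathbf0,\bSigma)$.

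I expect the main obstacle to be the uniformity in the Lindeberg step: because the $\xi_j$ are independent but \emph{not} identically distributed, the control of the tail second moments must be uniform in both $j$ and $N$. This is exactly where the uniform boundedness of the weights (supplied by Assumption 2 through the lower bound on $\det\bGamma_N$) combines with the per-component second-moment hypothesis (Assumption 1) to yield the required uniform integrability, while the identity $\langle\ba^m\bp^k\rangle_N=\delta_{mk}$ is the structural fact guaranteeing that the summands are centered and that the limiting mean is the correct target $\bar g^{(m)}$.
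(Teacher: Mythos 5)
Your proof is correct, but it is worth noting that the paper itself does not prove this lemma at all: it simply cites Theorem 4.2 of Maiboroda and Sugakova (2012) (equivalently Theorem 3.1.2 of Maiboroda and Sugakova (2008)) for the univariate case and invokes the Cram\'er--Wold device for the multivariate extension. What you have written is therefore a self-contained reconstruction of the argument that the citation encapsulates, and it is sound: the identity $\langle\ba^m\bp^l\rangle_N=\delta_{ml}$ (Laplace expansion of $\det\bGamma_N$ along the $m$th row, or of a determinant with a repeated row when $l\neq m$) gives exact unbiasedness and centering; the entries of $\bGamma_N$ lie in $[0,1]$, so the minors $\gamma_{mk;N}$ are bounded by a constant depending only on $M$, and together with $\det\bGamma_N>C$ this yields the uniform bound $|a_{j;N}^m|\le A(M,C)$; and since the law of $\xi_j$ is the fixed finite mixture $\sum_k p_{j;N}^kF_k$ with centerings $\M g(\xi_j)$ ranging over the compact convex hull of $\{\bar g^{(1)},\dots,\bar g^{(M)}\}$, Assumption 1 gives uniform integrability of $\{|g(\xi_j)-\M g(\xi_j)|^2\}$ over $j,N$, which is exactly the Lindeberg condition after rescaling by $A$. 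Your approach buys a transparent, reference-free proof and makes explicit where each hypothesis enters (Assumption 2 for the weight bound, Assumption 1 for uniform integrability, Assumption 3 for identifying the limit covariance), at the cost of being longer than the paper's one-line citation. Two small points you may wish to make explicit: the one-dimensional Lindeberg--Feller theorem applied to the projections $\langle t,\zeta_{j;N}\rangle$ should be supplemented by the trivial Chebyshev argument in the degenerate case $t^T\bSigma t=0$, and the uniformity over the compact set of centerings deserves the one-line estimate $|g-c|^2\mathbh1\{|g-c|>t\}\le(2|g|^2+2\sup|c|^2)\,\mathbh1\{|g|>t-\sup|c|\}$ so that dominated convergence under each $F_k$ applies.
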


Then
\[
\sqrt{N}\bigl(\hat g^{(m)}_{;N}-\bar g^{(m)}\bigr)
\weak N(0,\bSigma).
\]
For univariate $\hat g^{(m)}_{;N}$, the statement of the
lemma is contained in Theorem 4.2 from
\cite{Maiboroda:StatisticsDNA} (or Theorem 3.1.2 in
\cite{MS2008}). The multivariate case can be obtained from the
univariate one applying the Cram\'er--Wold device (see \cite
{Billingsley}, p.~382).

\section{Estimate for $\bb_m$ and its asymptotics}\label{SectAss}

In view of Lemma \ref{LemConsist}, we expect that, under suitable assumptions,
\begin{align*}
J_{m;N}(\bb)&\bydef{1\over N}\sum_{j=1}^N\ba_{j;N}^m\Biggl(Y_{j;N}-\sum_{i=1}^d b_iX_{j:N}^i \Biggr)^2\\
&=\int \Biggl(y-\sum_{i=1}^db_ix^i\Biggr)^2\hat F_{m;N}\bigl(dy,dx^1,\dots,dx^d\bigr)
\end{align*}
converges to
\begin{align*}
&\int \Biggl(y-\sum_{i=1}^db_ix^i\Biggr)^2 F_m\bigl(dy,dx^1,\dots,dx^d\bigr)\\
&\quad =\M \Biggl[ \Biggl(Y(O)-\sum_{i=1}^d b_iX^i(O) \Biggr)^2\ \Bigg\vert\ \kappa(O)=m \Biggr]\bydef J_{m;\infty}(\bb)
\end{align*}
as $N\to\infty$.

Since $J_{m;\infty}(\bb)$ attains its minimal value at $\bb^{(m)}$, it
is natural to suggest the argmin of $J_{m;N}(\bb)$ as an estimate
for $\bb^{(m)}$. If the weights $\ba^m$ were positive, then this argmin
would be
\begin{equation}
\label{EqEsimDef} \hat\bb_{;N}^{(m)}\bydef \bigl(
\bX^T\bA\bX\bigr)^{-1}\bX^T\bA\bY,
\end{equation}
where $\bX\bydef(X_j^i)_{j=1,\dots,N;\ i=1,\dots,d}$ is the $N\times
d$ matrix of observed regressors, $\bY\bydef(Y_1,\dots,Y_N)^T$ is
the vector of observed responses, and
$\bA\bydef\diag(a_{1;N}^m,\dots,a_{N;N}^m)$ is the diagonal
weight matrix for estimation of $m$th component. (Obviously,
$\bA$ depends on $m$, but we do not show it explicitly by a~ subscript
since the number $m$ of the component for which
$\bb_m$ is estimated will be further fixed.)

Generally speaking, by (\ref{EqDefa}) $a_{j;N}^m$ must be negative for
some $j$, so
$\hat\bb_{;N}^{(m)}$ is not necessarily an argmin of $\hat
J_{m;N}(\bb)$. But we will take $\hat\bb_{;N}^{(m)}$ as an estimate
for $\bb_m$ and call it a modified least-squares estimate for
$\bb_m$ in MVC model (MVC-LS estimate).

Let
\[
\bD^{(k)}\bydef\M \bigl[\bX(O)\bX^T(O)\ \big\vert \ \kappa(O)=k
\bigr]
\]
be the matrix of second moments of the regressors for subjects
belonging to the $k$th component. Denote the
variance of the $k$th component's  error term by
\[
\bigl(\sigma^{(k)} \bigr)^2=\M \bigl[\bigl(\varepsilon(O)
\bigr)^2\ \big\vert\ \kappa(O)=k \bigr].
\]
(Recall that
$\M [(\varepsilon(O))\ |\  \kappa(O)=k ]=0$).
In what follows, we assume
that these moments and variances exist for all components.

\begin{thm}[Consistency]\label{ThConsist}
Assume that
\begin{enumerate}
\item[\textup{1.}] $\bD^{(k)}$ and $(\sigma^{(k)})^2$ are finite for
all $k=1,\dots,M$.
\item[\textup{2.}] $\bD^{(m)}$ is nonsingular.
\item[\textup{3.}] There exists $C>0$ such that $\det\bGamma_N>C$ for all $N$
large enough.
\end{enumerate}
\end{thm}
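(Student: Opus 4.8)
The plan is to express the estimate as a continuous function of two weighted empirical moment estimators and then apply Lemma~\ref{LemConsist}. Rewriting (\ref{EqEsimDef}) as
\[
\hat\bb_{;N}^{(m)} = \Biggl(\frac{1}{N}\bX^T\bA\bX\Biggr)^{-1}\Biggl(\frac{1}{N}\bX^T\bA\bY\Biggr),
\]
I observe that $\frac{1}{N}\bX^T\bA\bX = \frac{1}{N}\sum_{j=1}^N a_{j;N}^m \bX_j\bX_j^T$ and $\frac{1}{N}\bX^T\bA\bY = \frac{1}{N}\sum_{j=1}^N a_{j;N}^m \bX_j Y_j$ are precisely the functional moment estimators $\hat g^{(m)}_{;N}$ of Section~\ref{SectNonp} for the choices $g(\xi)=\bX\bX^T$ and $g(\xi)=\bX Y$, respectively.

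First I would check the moment hypothesis of Lemma~\ref{LemConsist} for both functions. Finiteness of $\M[\rrvert\bX\bX^T\rrvert\mid\kappa=k]$ is immediate from the finiteness of $\bD^{(k)}$ in Assumption~1, while finiteness of $\M[\rrvert\bX Y\rrvert\mid\kappa=k]$ follows from the regression model and the Cauchy--Schwarz inequality, using that $\bD^{(k)}$ and $(\sigma^{(k)})^2$ are both finite. Assumption~3 supplies the determinant bound required by Lemma~\ref{LemConsist}, so that
\[
\frac{1}{N}\bX^T\bA\bX \inprob \bD^{(m)}, \qquad \frac{1}{N}\bX^T\bA\bY \inprob \M\bigl[\bX Y\mid\kappa=m\bigr].
\]

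The key algebraic step is to identify the second limit. On the event $\{\kappa=m\}$ we may substitute $Y(O)=\bX^T(O)\bb^{(m)}+\varepsilon(O)$, which gives
\[
\M\bigl[\bX Y\mid\kappa=m\bigr] = \bD^{(m)}\bb^{(m)} + \M\bigl[\bX\varepsilon\mid\kappa=m\bigr].
\]
Because $\varepsilon(O)$ is conditionally independent of $\bX(O)$ given $\kappa(O)$ and $\M[\varepsilon(O)\mid\kappa(O)=m]=0$, the cross term factorizes as $\M[\bX\mid\kappa=m]\,\M[\varepsilon\mid\kappa=m]=0$, so the limit equals $\bD^{(m)}\bb^{(m)}$.

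Finally, since $\bD^{(m)}$ is nonsingular by Assumption~2 and matrix inversion is continuous at nonsingular matrices, the continuous mapping theorem yields
\[
\hat\bb_{;N}^{(m)} \inprob \bigl(\bD^{(m)}\bigr)^{-1}\bD^{(m)}\bb^{(m)} = \bb^{(m)}.
\]
I expect the only delicate point to be the passage from the separate convergences of the two moment estimators to the convergence of their product; this is handled by the continuous mapping theorem applied to the map $(\bM,\bV)\mapsto\bM^{-1}\bV$, whose continuity at $(\bD^{(m)},\bD^{(m)}\bb^{(m)})$ is guaranteed by the nonsingularity of $\bD^{(m)}$.
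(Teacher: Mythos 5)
Your proposal is correct and follows essentially the same route as the paper: apply Lemma~\ref{LemConsist} to the two weighted empirical moments $\frac{1}{N}\bX^T\bA\bX$ and $\frac{1}{N}\bX^T\bA\bY$, identify the limits as $\bD^{(m)}$ and $\bD^{(m)}\bb^{(m)}$, and conclude by continuity of $(\bM,\bV)\mapsto\bM^{-1}\bV$ at the nonsingular $\bD^{(m)}$. The paper states the second limit via the argmin characterization $\bb^{(m)}=(\bD^{(m)})^{-1}\M[Y(O)\bX(O)\mid\kappa(O)=m]$ rather than expanding the regression model, but this is the same identity, and your version merely supplies the details (moment checks, the vanishing cross term) that the paper leaves implicit.
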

Then $\hat\bb^{(m)}_{;N}\inprob\bb^{(m)}$ as $N\to\infty$.

\medskip

\noindent\textbf{Note.} Assumption 3 can be weakened.
Applying Theorem 4.2 from
\cite{Maiboroda:StatisticsDNA}, we can show that
$\hat\bb^{(m)}_{;N}$ is consistent if the vector $\bp^m_N$ is
asymptotically linearly independent from the vectors $\bp^i_N$,
$i\not=m$, as $N\to\infty$. To avoid complexities in this
presentation, we do not formulate the strict meaning of this
statement.

\medskip

Denote
$D^{ik(s)}\bydef\M\big[ X^i(O)X^k(O)\ \big\vert\ \kappa(O)=s\big]$,
\begin{eqnarray*}
&\bL^{ik(s)}\bydef\bigl(\M\bigl[ X^i(O)X^k(O)X^q(O)X^l(O)\ \big\vert\ \kappa(O)=s\bigr]\bigr)_{l,q=1}^d,&\\
&\bM^{ik(s,p)}\bydef\bigl(D^{il(s)}D^{kq(p)}\bigr)_{l,q=1}^d.&
\end{eqnarray*}

\begin{thm}[Asymptotic normality]\label{ThCLT}
Assume that
\begin{enumerate}
\item[\textup{1.}] $\M[(X^i(O))^4\ |\ \kappa(O)=k]<\infty$ and
$\M[(\varepsilon(O))^4\ |\ \kappa(O)=k]<\infty$
for all $k=1,\dots,M$.

\item[\textup{2.}] Matrix $\bD=\bD^{(m)}$ is nonsingular.

\item[\textup{3.}] There exists $C>0$ such that $\det\bGamma_N>C$ for all $N$
large enough.

\item[\textup{4.}] For all $s$,$p=1,\dots,M$, there exist
$\langle(\ba^m)^2\bp^s\bp^p\rangle$.
\end{enumerate}
\end{thm}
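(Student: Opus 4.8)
The plan is to linearize the estimation error and apply a central limit theorem for a triangular array of independent summands. Substituting $Y_{j;N}=\bX_j^T\bb^{(\kappa_j)}+\varepsilon_j$ into (\ref{EqEsimDef}), I would first derive the exact identity
\[
\sqrt N\bigl(\hat\bb^{(m)}_{;N}-\bb^{(m)}\bigr)=\hat\bD_N^{-1}\,T_N,\qquad \hat\bD_N\bydef\frac1N\bX^T\bA\bX,\quad T_N\bydef\frac1{\sqrt N}\sum_{j=1}^N\eta_j,
\]
where $\eta_j\bydef a_{j;N}^m\bigl[\bX_j\bX_j^T\bigl(\bb^{(\kappa_j)}-\bb^{(m)}\bigr)+\bX_j\varepsilon_j\bigr]$ are independent across $j$. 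For the matrix factor, applying Lemma \ref{LemConsist} with $g(\xi)=\bX\bX^T$ (integrable under Assumption 1, with its weighted mean converging by Assumption 3) gives $\hat\bD_N\inprob\M[\bX\bX^T\mid\kappa=m]=\bD$; by Assumption 2 and continuity of inversion, $\hat\bD_N^{-1}\inprob\bD^{-1}$.

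The heart of the proof is $T_N\weak N(0,\bV)$. I would use the exact biorthogonality $\langle\ba^m\bp^k\rangle_N=\delta_{mk}$, which follows from (\ref{EqDefa}) by cofactor expansion of $\det\bGamma_N$; combined with $\M[\varepsilon(O)\mid\kappa(O)=k]=0$ and the conditional independence of $\varepsilon$ and $\bX$, it yields $\M[T_N]=0$ for every $N$. For the covariance I would write $\cov(\eta_j)=\M[\eta_j\eta_j^T]-\M[\eta_j]\M[\eta_j]^T$ and average: with $c^{(k)}=\bb^{(k)}-\bb^{(m)}$ the first part gives $\sum_k\langle(\ba^m)^2\bp^k\rangle_N\bigl((\sigma^{(k)})^2\bD^{(k)}+\sum_{i,l}c_i^{(k)}c_l^{(k)}\bL^{il(k)}\bigr)$, while the nonvanishing second part gives $\sum_{s,p}\langle(\ba^m)^2\bp^s\bp^p\rangle_N\sum_{i,l}c_i^{(s)}c_l^{(p)}\bM^{il(s,p)}$. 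Because $\sum_p\bp^p\equiv1$, the single-index averages satisfy $\langle(\ba^m)^2\bp^k\rangle_N=\sum_p\langle(\ba^m)^2\bp^k\bp^p\rangle_N$, so Assumption 4 guarantees $\cov(T_N)\to\bV$ for the corresponding limiting matrix $\bV$.

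Finally I would verify the Lindeberg condition for $\{N^{-1/2}\eta_j\}$: Assumption 3 bounds the weights $|a_{j;N}^m|$ uniformly, and the finite fourth moments of $\bX$ and second moments of $\varepsilon$ from Assumption 1 make $\M[|\eta_j|^2]$ uniformly bounded, so the truncated second moments $\M[|\eta_j|^2\Ii\{|\eta_j|>\varepsilon\sqrt N\}]$ tend to $0$ uniformly by dominated convergence. The Cram\'er--Wold device reduces the vector statement to scalars $u^T\eta_j$, $u\in\R^d$, for which Lindeberg--Feller gives $T_N\weak N(0,\bV)$; since $\hat\bD_N^{-1}$ converges in probability to the constant $\bD^{-1}$, Slutsky's theorem applies regardless of dependence and delivers $\sqrt N(\hat\bb^{(m)}_{;N}-\bb^{(m)})\weak N(0,\bD^{-1}\bV\bD^{-1})$. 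I expect the main obstacle to be the covariance computation: recognizing that subtracting the individual means $\M[\eta_j]$ is precisely what produces the double-index averages $\langle(\ba^m)^2\bp^s\bp^p\rangle$ of Assumption 4, and correctly organizing the fourth-order moments into $\bL^{il(k)}$ and $\bM^{il(s,p)}$.
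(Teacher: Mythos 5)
Your proposal is correct and follows the same skeleton as the paper's proof: the identical linearization $\sqrt N(\hat\bb^{(m)}_{;N}-\bb^{(m)})=[\frac1N\bX^T\bA\bX]^{-1}\cdot\frac1{\sqrt N}(\bX^T\bA\bY-\bX^T\bA\bX\bb^{(m)})$, Lemma~\ref{LemConsist} for the matrix factor, the biorthogonality $\langle\ba^m\bp^k\rangle_N=\mathbh1\{m=k\}$ to kill the mean, a covariance computation producing exactly the two groups of terms in (\ref{varequal2}), and Slutsky to finish. The one genuine divergence is the CLT step: the paper delegates it to Lemma~\ref{LemCLT} (the asymptotic-normality result for weighted empirical moments imported from \cite{Maiboroda:StatisticsDNA}), applied to $g(\xi_j)=(X_j^i(Y_j-\sum_l X_j^lb_l^{(m)}))_{i=1}^d$, whereas you verify Lindeberg--Feller plus Cram\'er--Wold directly for the triangular array $\{N^{-1/2}\eta_j\}$; your route is self-contained but must justify uniform integrability of $|\eta_j|^2$ (which does hold, since the $\eta_j$ are mixtures of finitely many fixed distributions scaled by weights that Assumption~3 bounds uniformly --- your appeal to dominated convergence is the right idea but deserves that one extra sentence). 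A second, cosmetic difference: the paper organizes the covariance via the artificial representation $\xi_j'=\sum_k\delta_j^{(k)}\xi_j^{(k)}$ and the split $\zeta_N=(S_1^i+S_2^i)_i$ with $\cov(S_1^i,S_2^k)=0$, while you compute $\cov(\eta_j)=\M\eta_j\eta_j^T-\M\eta_j(\M\eta_j)^T$ directly; both yield the same $\bSigma$, and your observation that the subtracted product of means is precisely what generates the double-index averages $\langle(\ba^m)^2\bp^s\bp^p\rangle$ of Assumption~4 is exactly the point. Only watch the notation at the end: what you call $\bV$ is the paper's $\bSigma$, so your final $\bD^{-1}\bV\bD^{-1}$ is the theorem's $\bV$ in (\ref{EqDefV}).
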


Then $\sqrt{N}(\hat\bb^{(m)}_{;N}-\bb^{(m)})\weak N(0,\bV)$,
where
\begin{equation}
\label{EqDefV} \bV\bydef\bD^{-1}\bSigma\bD^{-1}
\end{equation}
with
\begin{eqnarray}
&\displaystyle\bSigma=\bigl(\varSigma^{ik}\bigr)_{ik=1}^d,\nonumber&\\
&\displaystyle\varSigma^{ik} = \sum_{s=1}^M \big\langle\bigl(\ba^{m}\bigr)^2 \bp^s\big\rangle \bigl(D^{ik(s)}\bigl(\sigma^{(s)}\bigr)^2 + \bigl(\bb^{(s)} - \bb^{(m)}\bigr)^T \bL^{ik(s)}\bigl(\bb^{(s)} - \bb^{(m)}\bigr) \bigr)\nonumber\hspace{-6pt}&\\
&\displaystyle\qquad  - \sum_{s=1}^M \sum_{p=1}^M \big\langle\bigl(\ba^{m}\bigr)^2 \bp^s \bp^p\big\rangle \bigl(\bb^{(s)} - \bb^{(m)}\bigr)^T \bM^{ik(s,p)}\bigl(\bb^{(p)} - \bb^{(m)}\bigr).&\label{varequal2}
\end{eqnarray}

\section{Proofs}\label{SectProof}

\begin{proof}[Proof of Theorem \ref{ThConsist}.]
Note that if $\bD^{(m)}$ is nonsingular, then
\[
\bb^{(m)}=\argmin_{\bb\in\R^d}J_{m;\infty}(\bb) =\bigl(
\bD^{(m)}\bigr)^{-1}\M \bigl[\bigl(Y(O)\bX(O)\bigr)\ \big\vert\
\kappa(O)=m \bigr].
\]
By Lemma \ref{LemConsist},
\[
\bX^T\bA\bX={1\over N}\sum
_{j=1}^N a_{j;N}^m
\bX(O_j)\bX^T(O_j) \inprob\bD^{(m)}
\]
and
\[
\bX^T\bA\bY={1\over N}\sum
_{j=1}^N a_{j;N}^mY(O_j)
\bX(O_j) \inprob \M \bigl[\bigl(Y(O)\bX(O)\bigr)\ \big\vert\ \kappa(O)=m
\bigr]
\]
as $N\to\infty$. This implies the statement of the theorem.
\end{proof}

\begin{proof}[Proof of Theorem \ref{ThCLT}.]
Let us introduce a set of random vectors
$
\xi_j^{(k)}=(Y_j^{(k)},X_j^{1(k)},\break\dots,X_j^{d(k)})^T
$,
$j=1,2,\dots$,
with distributions $F_k$
that are independent for different $j$ and $k$ and independent
from $\kappa_j$. Denote
$\delta_j^{(k)}=\mathbh1\{\kappa_j=k\}$,\vspace*{-3pt}
\[
\xi_j'\bydef\sum_{k=1}^M
\delta_j^{(k)}\xi_j^{(k)}.
\]
Then the distribution of $\varXi_N'=(\xi_1',\dots,\xi_N')$ is the
same as that of $\varXi_N$. Since in this theorem we are interested in
weak convergence only, without
loss of generality, let us assume that $\varXi_N=\varXi_N'$ . By $\FFF$ we
denote the sigma-algebra
generated by $\xi_j^{(k)},\,j=1,\dots,N,\, k=1,\dots, M$.

Let us show that $\sqrt{N} (\hat\bb^{(m)}_{;N} - \bb^{(m)})$ converges
weakly to $N(0,\bV)$.
It is readily seen that\vspace*{-3pt}
\[
\sqrt{N} \bigl(\hat\bb^{(m)}_{;N} - \bb^{(m)}\bigr)
=\biggl[\frac{1}{N}\bigl(\bX^T \bA\bX\bigr)
\biggr]^{-1}\biggl[\frac{1}{\sqrt{N}}\bigl(\bX^T \bA\bY-
\bX^T \bA\bX\bb^{(m)}\bigr)\biggr].
\]

Since $\frac{1}{N}(\bX^T \bA\bX) \inprob\bD$, we need only to
show week convergence of the random vectors
$\frac{1}{\sqrt{N}}(\bX^T \bA\bY- \bX^T \bA\bX\bb^{(m)})$
to $N(0,\bSigma)$.

Denote\vspace*{-3pt}
\begin{align*}
&g(\xi_j)\bydef\Biggl(X_j^i\Biggl(Y_j - \sum_{l=1}^d X_j^l b_l^{(m)}\Biggr)\Biggr)_{i=1}^d,\\
&\gamma_j^i\bydef\frac{1}{\sqrt{N}} a_{j;N}^{(m)}X_j^i \Biggl(Y_j - \sum_{l=1}^d X_j^l b_l^{(m)}\Biggr).
\end{align*}

Obviously,
\[
\zeta_N\bydef\frac{1}{\sqrt{N}}\bigl(\bX^T \bA\bY-
\bX^T \bA\bX\bb^{(m)}\bigr)=\sqrt{N}\hat g_{;N}^{(m)}
= \Biggl(\sum_{j=1}^N
\gamma_j^i \Biggr)_{i=1}^d.
\]
We will apply Lemma \ref{LemCLT} to show that
$\sqrt{N}\hat g_{;N}^{(m)}\weak N(0,\bSigma)$.

First, let us show that $\bar g^{(m)}=\M\hat g_{;N}^{(m)}=0$.
It is equivalent to
$\M\sum_{j=1}^{N}\gamma_{j}^i = 0$ for all $i=1,\dots,d$. In fact,
\begin{align*}
&\M\sum_{j=1}^{N}\gamma_{j}^i\\
&\quad =\M \Biggl[ \M \frac{1}{\sqrt{N}}\sum_{j=1}^{N}a_{j;N}^{(m)} \sum_{s=1}^M\delta_j^{(s)} X_j^{i (s)} \Biggl(\sum_{s=1}^M \delta_j^{(s)}Y_j{(s)} -\sum_{l=1}^d \sum_{s=1}^M \delta_j^{(s)}X_j^{l (s)} b_l^{(m)}\Biggr) \Bigg\vert \FFF\Biggr]\\
&\quad =\frac{1}{\sqrt{N}} \M\sum_{j=1}^{N}a_{j;N}^{(m)}\sum_{s=1}^M p_{j;N}^s X_j^{i (s)} \Biggl(Y_j^{(s)} - \sum_{l=1}^d X_j^{l(s)}b_l^{(m)}\Biggr)\\
&\quad =\sqrt{N} \sum_{s=1}^M \underbrace{{\bigl\langle a^{(m)} \bp^s \bigr\rangle }_N}_{ \mathbh1\{m = s\}}\M\Biggl[X_1^{i (s)} \Biggl( Y_1^{(s)} -\sum_{l=1}^d X_1^{l (s)}b_l^{(m)} \Biggr)\Biggr]\\
&\quad =\sqrt{N} \M\Biggl[X_1^{i (m)} \underbrace{\Biggl(Y_1^{(m)} - \sum_{l=1}^d X_1^{l (m)} b_l^{(m)}\Biggr)}_{\varepsilon_1^{(m)}}\Biggr] = \sqrt{N} \M X_1^{i (m)} \M\varepsilon_1^{(m)} = 0.
\end{align*}

So
\[
\zeta_N= \Biggl(\sum_{j=1}^N
\gamma_j^i \Biggr)_{i=1}^d = \Biggl(
\sum_{j=1}^N \gamma_j^i-
\M\gamma_j^i \Biggr)_{i=1}^d.
\]
In view of Lemma \ref{LemCLT}, to complete
the proof, we only need  to show that
$\cov(\zeta_N)\to\bSigma$.

Denote
\[
\zeta_j^{i(s)} = \delta_j^{(s)}
X_j^{i(s)} \Biggl(\sum_{l=1}^{d}
X_j^{l(s)}\bigl(b_l^{(s)} -
b_l^{(m)}\bigr) + \varepsilon_j^{(s)}
\Biggr)
\]
and
\[
\eta_j ^{i(s)}= \delta_j^{(s)}
\Biggl(\sum_{l=1}^{d} D^{il(s)}
\bigl(b_l^{(s)} - b_l^{(m)}\bigr)
\Biggr)
\]
Then
\[
\zeta_N = \Biggl(\frac{1}{\sqrt{N}}\sum_{j=1}^N
a_j^{(m)} \sum_{s=1}^M
\bigl( \bigl(\zeta_j^{i(s)} - \eta_j^{i(s)}
\bigr) + \bigl(\eta_j^{i(s)} - \M\zeta_j^{i(s)}
\bigr)\bigr)\Biggr)_{i=1}^d = \bigl(S_1^i
+ S_2^i\bigr)_{i=1}^d,
\]
where
\[
S_1^i=\frac{1}{\sqrt{N}}\sum
_{j=1}^N a_j^{(m)} \sum
_{s=1}^M \bigl(\zeta_j^{i(s)}
- \eta_j^{i(s)}\bigr), \qquad S_2^i=
\frac{1}{\sqrt{N}}\sum_{j=1}^N
a_j^{(m)} \sum_{s=1}^M
\bigl(\eta_j^{i(s)} - \M\zeta_j^{i(s)}
\bigr).
\]
Note that
\[
\M\bigl(\zeta_j^{i(s)}-\eta_j^{i(s)}
\bigr) =\M\M\bigl[\bigl(\zeta_j^{i(s)}-\eta_j^{i(s)}
\bigr)\ \big\vert\ \kappa_j\bigr]=0.
\]
Now
\begin{align}
&\cov\bigl(S_1^i,S_2^k\bigr)\nonumber\\
&\quad =\frac{1}{N}\sum_{j=1}^N\bigl(a_{j;N}^{(m)}\bigr)^2 \sum_{s=1}^M \sum_{p=1}^M\M\bigl(\zeta_j^{i(s)} - \eta_j^{i(s)}\bigr) \eta_j^{k(p)}\nonumber\\
&\quad = \frac{1}{N}\sum_{j=1}^N\bigl(a_{j;N}^{(m)}\bigr)^2 \sum_{s=1}^M \sum_{p=1}^M\M\delta_j^{(s)} \Large\Biggl[\sum_{l=1}^d \bigl(X_j^{i(s)}X_j^{l(s)} - D^{il(s)}\bigr) \bigl(b_l^{(s)}- b_l^{(m)}\bigr)\nonumber\\
&\qquad +X_j^{i(s)} \varepsilon_j^{i(s)}\Large\Biggr] \cdot\delta_j^{(p)} \Biggl(\sum_{q=1}^d D^{kq(p)}\bigl(b_q^{(p)}- b_q^{(m)}\bigr) \Biggr)\nonumber\\
&\quad =\frac{1}{N}\sum_{j=1}^N\bigl(a_j^{(m)}\bigr)^2 \sum_{s=1}^M p_{j;N}^s \Biggl(\sum_{l=1}^d \bigl(\underbrace{D^{il(s)}- D^{il(s)}}_{0}\bigr) \bigl(b_l^{(s)} -b_l^{(m)}\bigr) + \M X_j^{i (s)}\underbrace{ \M \varepsilon_j^{(s)}}_{0}\Biggr)\nonumber\\
&\qquad \times\Biggl(\sum_{q=1}^d D^{kq(s)}\bigl(b_q^{(s)} - b_q^{(m)}\bigr) \Biggr) = 0;
\end{align}\vspace{-12pt}
\begin{align*}
\cov\bigl(S_1^i,S_1^k\bigr) &= \frac{1}{N}\sum_{j=1}^N\bigl(a_{j;N}^{(m)}\bigr)^2 \sum_{s=1}^M p_{j;N}^s \sum_{q=1}^d\sum_{l=1}^d\bigl(b_l^{(s)} - b_l^{(m)}\bigr) \M\bigl(\bigl(X_j^{i(s)}X_j^{l(s)} -D^{il(s)}\bigr)\\[3pt]
&\quad \times\bigl(X_j^{k(s)}X_j^{q(s)} - D^{kq(s)}\bigr)\bigr) \bigl(b_q^{(s)} - b_q^{(m)}\bigr)\\[3pt]
&\quad + \frac{1}{N}\sum_{j=1}^N \bigl(a_{j;N}^{(m)}\bigr)^2 \sum_{s=1}^M p_{j;N}^s D^{ik(s)}) \bigl(\sigma^{(s)}\bigr)^2;\\[3pt]
\cov\bigl(S_2^i,S_2^k\bigr) &= \frac{1}{N}\sum_{j=1}^N \bigl(a_{j;N}^{(m)}\bigr)^2 \sum_{s=1}^M p_{j;N}^s \sum_{q=1}^d \sum_{l=1}^d \bigl(b_l^{(s)} - b_l^{(m)}\bigr)D^{il(s)}\\[2pt]
&\quad \times\Biggl[ \bigl(b_q^{(s)} - b_q^{(m)}\bigr) D^{kq(s)} - \sum_{r=1}^M p_{j;N}^r \bigl(b_q^{(r)} - b_q^{(m)}\bigr) D^{kq(r)}\Biggr].
\end{align*}

Thus,
\begin{align*}
\cov\zeta_N &= \frac{1}{N}\sum_{j=1}^N \bigl(a_{j;N}^{(m)}\bigr)^2 \sum_{s=1}^M p_{j;N}^s D^{ik(s)} \bigl(\sigma^{(s)}\bigr)^2\\[2pt]
&\quad + \frac{1}{N}\sum_{j=1}^N\bigl(a_j^{(m)}\bigr)^2 \sum_{s=1}^M p_{j;N}^s \sum_{q=1}^d \sum_{l=1}^d\bigl(b_l^{(s)} - b_l^{(m)}\bigr)\Biggl[ \bigl(b_q^{(s)} - b_q^{(m)}\bigr)\\[2pt]
&\quad \times \underbrace{\M X_j^{i(s)} X_j^{k(s)}X_j^{l(s)}X_j^{q(s)}}_{L_{lq}^{ik(s)}} - D^{il(s)} \sum_{r=1}^M p_{j;N}^r \bigl(b_q^{(r)} - b_q^{(m)}\bigr) D^{kq(r)}\Biggr]\Bigg)_{i,k=1}^d.
\end{align*}

From the last equation we get $\cov(\zeta_N)\to\bSigma$ as
$N\to\infty$.
\end{proof}

\section{Results of simulation}\label{SectSimulat}

To assess the accuracy of the asymptotic results from Section
\ref{SectAss}, we performed a small simulation study. We considered a
two-component mixture ($M=2$) with mixing
probabilities $p_{j;N}^1=j/N$ and $p_{j;N}^2=1-p_{j;N}^1$. For
each subject, there were two observed variables $X$ and $Y$, which
were simulated based on the simple\vadjust{\eject} linear regression model
\[
Y_j=b_0^{\kappa_j}+b_1^{\kappa_j}X_j^{(\kappa_j)}+
\varepsilon_j^{(\kappa_j)},
\]
where $\kappa_i$ is the number of component the $j$th
observation belongs to, $X_j^{(1)}$ was simulated as $N(1,1)$,
$X_j^{(2)}$ as $N(2,2.25)$, and $\varepsilon_j^{(k)}$ were zero-mean Gaussians
with standard deviations 0.01 for the first component and 0.05
for the second one. The values of the regression coefficients
were
$b_0^1=3$, $b_1^1=0.5$, $b_0^2=-2$, $b_1^2=1$.

The means and covariances of the estimates were calculated over 2000
replications.

The results of simulation are presented in Table \ref{Table1}.
The true values of parameters and asymptotic covariances are placed
in the last rows of the tables.
\begin{table}
\textwidth=8.7cm
\caption{Simulation results}\label{Table1}
\begin{tabular*}{\textwidth}{llllll}
\hline
\multicolumn{6}{l}{First component}\\
\hline\\[-8pt]
$n$ & $\M\hat b_0^1$ & $\M\hat b_1^1$ & $N\D\hat b_0^1$ & $N\D\hat b_1^1$ & $N\cov(\hat b_0^1,\hat b_1^1)$\\[1pt]
\hline
500 & 3.0014 & 0.5235 & 47.61 & 45.83 & $-$ 42.27 \\
1000 & 3.0033 & 0.512 & 41.19 & 37.50 & $-$35.04 \\
2000 & 3.0011 & 0.5032& 38.84 & 34.71 & $-$32.87 \\
5000 & 3.0003 & 0.5016& 39.54 & 34.49 & $-$32.83 \\[3pt]
$\infty$ & 3 & 0.5 & 39.13 & 33.96 &    $-$32.53\\

%
\hline
\multicolumn{6}{l}{Second component}\\
\hline\\[-8pt]
$n$ & $\M\hat b_0^2$ & $\M\hat b_1^2$ & $N\D\hat b_0^2$ & $N\D\hat b_1^2$ & $N\cov(\hat b_0^2,\hat b_1^2)$\\[1pt]
\hline
500  & $-$2.0243 & 1.0084 & 67.37 & 7.94 & $-$22.17 \\
1000 & $-$42.0100 & 1.0027 & 63.04 & 7.57 & $-$20.90 \\
2000 & $-$2.0039 & 1.0016 & 63.57 & 7.52 & $-$20.95 \\
5000 & $-$2.0074 & 1.0025 & 62.41 & 7.32 & $-$20.48 \\[3pt]
$\infty$ & $-$2 & 1 & 62.20 & 7.34 & $-$20.47\\
\hline
\end{tabular*}
\end{table}

The presented data show good concordance with the asymptotic
theory for $n> 1000$.

\section{Conclusions}

We considered a modification of least-squares estimators for
linear regression coefficients in the case where observations are
obtained from a mixture with varying concentrations. Conditions of
consistency and asymptotic normality of the estimators were
derived, and dispersion matrices were evaluated.
The results of simulations confirm good concordance of estimators
covariances with the asymptotic formulas for sample sizes larger
then 1000 observations.

In real-life data analysis, concentrations (mixing probabilities) are
usually not
known exactly but estimated. So, to apply the proposed technique,
we also need to analyze sensitivity of the estimates to
perturbations of the concentrations model. (We are thankful
to the unknown referee for this observation). It is worth
noting that performance of these estimates will be poor if the
true concentrations of the components are nearly linearly
dependent ($\det\bGamma_N\approx0$). We also expect stability of
the estimates w.r.t.\ concentration perturbations if
$\det\bGamma_ N$ is bounded away from zero. More deep analysis of
sensitivity will be a~part of our further work.


\end{document}